\newcommand{\rnc}{\renewcommand}
\newcommand{\nc}{\newcommand}
\newcommand{\mrm}{\mathrm}
\renewcommand{\hat}{\widehat}
\nc{\mb}{\mathbb}
\nc{\mc}{\mathcal}
\nc{\E}{\mb{E}}
\nc{\N}{\mb{N}}
\nc{\R}{\mb{R}}
\nc{\Q}{\mb{Q}}
\rnc{\P}{\mrm P}
\rnc{\d}{\mrm d}
\nc{\C}{\mc{C}}
\nc{\D}{\mc{D}}
\nc{\B}{\mc{B}}
\nc{\vbeta}{\bm \beta}
\nc{\vtheta}{\bm \theta}
\nc{\vX}{\bm X}
\nc{\vy}{\bm y}
\nc{\vU}{\bm U}
\nc{\vI}{\bm I}
\nc{\vE}{\bm E}
\nc{\ve}{\bm e}
\nc{\vV}{\bm V}
\nc{\vv}{\bm v}
\nc{\vS}{\bm S}
\nc{\vSigma}{\bm \Sigma}
\nc{\oPo}{\stackrel{\mrm p}{\rightarrow}}
\nc{\oWo}{\stackrel{w}{\rightarrow}}
\nc{\oDo}{\stackrel{d}{\longrightarrow}}
\nc{\eff}{\|F\|}
\nc{\red}{\color{red}}
\newtheorem{lemma}{Lemma}
\newtheorem{thm}{Theorem}
\newtheorem{cond}{Condition}
\newcommand{\intd}{\mathrm{d}}
\newcommand\blfootnote[1]{%
  \begingroup
  \renewcommand\thefootnote{}\footnote{#1}%
  \addtocounter{footnote}{-1}%
  \endgroup
} 
\begin{document}

\title{\Large \bf Wild Bootstrap based Confidence Bands\\ 
for Multiplicative Hazards Models}
\author[1$*$]{Dennis Dobler}
\author[2]{Markus Pauly}
\author[3]{Thomas H. Scheike} 

\affil[1]{Department of Mathematics, Vrije Universiteit Amsterdam, Netherlands.}
\affil[2]{Institute of Statistics, Ulm University, Germany.}
\affil[3]{Section of Biostatistics, University of Copenhagen, Denmark.}

\maketitle

\begin{abstract}
\noindent 
\blfootnote{${}^*$ e-mail:  d.dobler@vu.nl}
We propose new resampling-based approaches to construct asymptotically valid
	time simultaneous confidence bands for cumulative hazard functions in
	multi-state Cox models.  In particular, we exemplify the methodology in
	detail for the simple Cox model with time dependent covariates, where
	the data may be subject to independent right-censoring or
	left-truncation.  In extensive simulations we investigate their finite
sample behaviour. 
Finally, the methods are utilized to analyze an empirical example.  
\end{abstract}

\noindent{\bf Keywords: Cox regression, counting processes, hazards, martingale theory, multi-state models, survival analysis} 

\vfill
\vfill

 \newpage

%
\section{Introduction}
%

Wild bootstrap resampling has evolved as one of the state-of-the-art choices
for inferring cumulative incidences or hazards in nonparametric multi-state
models in event history analysis.  Starting with the initial papers by \cite{lin93} and \cite{lin1994confidence} for Cox models and \cite{lin97} for competing risks
set-ups, the basic idea is to consider martingale representations of the
nonparametric estimators (particularly, the Nelson-Aalen or Aalen-Johansen) and
to replace the non-observable martingale residuals $\d M_i(u)$ with randomly
weighted counting processes $G_i \d N_i(u)$.  This approach has been extended
in various directions, allowing for arbitrary multipliers 
\citep{beyersmann12b,dobler14, dobler2017non} and multiple, possibly recurrent, 
states
\citep{dobler2016nonparametric, bluhmki18a, bluhmki18b}.  
In the current paper
we like to transfer the latter results from the nonparametric case to 
semiparametric regression models.  
The most used regression model in survival analysis is Cox's proportional hazard model.
It is highly useful to estimate the survival function for
specific covariates, e.g., to show how the model predicts survival. 
The survival function of interest 
is then typically provided with point-wise confidence intervals which is implemented in all major software
packages. 
In reality, however, when interest is in the survival function as a whole, it would be preferable to report it together with uniform confidence intervals.
These so-called confidence bands describe the uncertainty of the whole 
survival function. 
This is often not done in practice because there are few programs that construct such 
uniform bands.
In addition, apart from only few exceptions such as \citep{lin97}, 
systematic evaluations of finite sample results, that demonstrate the performance of such bands, are rarely available in the literature.
We here provide such results and in addition investigate various new resampling bands that exhibit 
improved performance for smaller sample sizes compared to previously implemented bands for Cox's regression model.
This proportional hazards model \citep{cox72} is given by an individual-specific intensity function of 
the form 
\begin{align}
\label{eq:cox}
  \lambda_i(t, \vbeta_0) = Y_i(t) \lambda_0(t) \exp(\vX'_i(t) \vbeta_0).
\end{align}

Our main achievements are the introduction of valid resampling strategies that
jointly mimic the unknown distribution of baseline and parameter estimators for
Model~\eqref{eq:cox} and corresponding multi-state versions
\citep{martinussen06}. 
Different to existing approaches \citep{lin93,martinussen06},
we prove their theoretical validity by martingale-based
arguments which allow the simultaneous treatment of different mechanisms for
incomplete observations. In particular, the observations may be subject to
independent right-censoring and left-truncation.\\

\noindent{\bf How to resample?} There exist plenty of possible approaches 
to achieve the above tasks in Model~\eqref{eq:cox} with independent right-censoring alone. 
A first  corresponds to the nonparametric ansatz at the outset: here, we consider
martingale representations of the Breslow estimator for the cumulative hazard
and a parameter estimator that is found via a likelihood approach.
Then we replace the involved martingale residuals $\d M_i(u)$ with re-weighted counting processes
$G_i \d N_i(u)$ (e.g., \citealt{lin93}).  Since the latter do not take the
semiparametric nature into account, another possibility would be to replace them
with $G_i \d \hat M_i(u)$ (e.g., \citealt{spiekerman98}).  Here $\hat M_i$ are
estimators of the martingales $M_i$, that exploit the involved covariates and
allow for a greater range of applicability, for instance in rate estimations.

A novel and even more natural approach starts one step earlier by rewriting
 the score equations for the baseline function and the Euclidean parameter:
 after identifying a martingale representation of the score equations, both
 multiplier techniques from above lead to new equations which are solved by
 quantities depending on the $G_i$.  Hence, paralleling the same steps as for
 the original estimators, we receive their resampling counterparts in a
 primal way.

In all approaches we follow \cite{beyersmann12b} and allow for general wild bootstrap 
weights, i.e., the $G_i$ are i.i.d. random variables with zero mean and unit variance 
that are independent of the data. 

For ease of presentation we exemplify the new methodology mainly for the rather
simple Cox model but also explain their extensions to more general multi-state
or even other regression models.  The theoretical derivations for the wild bootstrap approaches thereby
utilize clever martingale arguments which are novel for bootstrapping in
semiparametric regression models. In particular, we prove that the wild
bootstrap counterparts share the martingale properties of the original
estimators -- and can therefore be handled in the same way, using convenient
martingale central limit theorems.  Thus, intricate derivations for verifying
conditional tightness are no longer required. Moreover, beneath theoretical
benefits, mirroring the martingale structure in the bootstrap world allows for
a simple interpretation and easy incorporation of missing mechanisms (such as
independent right-censoring or left-truncation). Consequently, our findings
allow for a wide range of applications, which to some extent will be discussed
in more detail in future papers.  Such martingale representations for the wild
bootstrap have first been made in the nonparametric context for resampling
Aalen-Johansen estimators; see \cite{dobler2016nonparametric} and
\cite{bluhmki18b} for details.\\


The paper is organized as follows: Section 2 outlines how estimation is done for Cox's regression model and
lists the technical conditions that are needed in proving the validity of the considered resampling approaches. 
Section 3 contains a description of the various wild bootstrap procedures that we consider here with theoretical 
statements about their validity. 
In addition, we discuss several important extensions to more general multi-state or other regression models. 
In Section 4 we present an extensive simulation study that compares the
various resampling procedures.
Section 5 has a brief demonstration of the methodology in a survival setting where 
interest is on constructing confidence bands for the survival function for patients with acute myocardial infarction.
Finally, we discuss the results in Section 6. 

All proofs are given in the Appendix, and these are a central part of this paper.
Their novelty lies in the fact that we are able to show the performance of our resampling methods using martingale 
methods which considerably simplify the technical arguments. 




%
\section{Joint large sample properties in the Cox model}
%

We consider the multiplicative Cox model \eqref{eq:cox} given by the intensity process 
$
  \lambda_i(t, {\vbeta}_0) = Y_i(t) \lambda_0(t) \exp(\vX'_i(t) \vbeta_0)
$ 
of the counting process $N_i(t)$ of subject $i$ given $\vX_i(t)$.  Here,
$Y_i(t)$ is the at-risk indicator of individual $i=1,\dots,n$ at time $t$,
$\lambda_0$ is the baseline hazard function, $\vX_i(t) = (X_{i1}(t), \dots,
X_{ip}(t))'$ is for each $t \geq 0$ a possibly time-dependent $p$-dimensional
vector of predictable covariates of individual $i$, and $\vbeta_0 = (\beta_1,
\dots, \beta_p)'$ is an unknown $p$-dimensional regression parameter
\citep{abgk93}.  Let $\tau > 0$ be a terminal evaluation time on the
treatment time-scale.  Throughout we assume that all $\vX_i(t)$ are contained
in a bounded set $F \subset \R^p$ and  denote the cumulative baseline hazard
function as $ \Lambda_0(t) = \int_0^t \lambda_0(s) \intd s$ which we assume to
be finite for all $t\leq \tau$.

A series of standard arguments typically leads to the {\it Breslow estimator} $\hat \Lambda_0$ for $\Lambda_0$ and the maximum likelihood parameter estimator $\hat \vbeta$ for $\vbeta_0$.
To illustrate this, let us simplify the derivations in \cite{scheike02} for the Cox-Aalen model to the present Cox model \eqref{eq:cox}: 
the score equation for the cumulative baseline function $\Lambda_0$ is given by
\begin{align}
\label{eq:score_baseline}
 \sum_{i=1}^n [ \d N_i(t) - Y_i(t) \exp(\vX_i'(t) \vbeta) \d \Lambda_0(t) ] = 0,
\end{align}
which is solved by 
$\hat \Lambda_0(t,\vbeta) = \sum_{i=1}^n \int_0^t J(u) S_0^{-1}(u,\vbeta) \d N_i(u)$.
Here, we used the definition of
$$S_k(t,\vbeta) = \sum_{i=1}^n \vX_i^{\otimes k}(t) Y_i(t) \exp(\vX_i'(t) \vbeta), \quad k=0,1,2, $$
where $\vy^{\otimes 2} = \vy\vy' \in \R^{p \times p}, \vy^{\otimes 1} = \vy \in \R^p$ and $\vy^{\otimes 0} = 1 \in \R$ for any vector $\vy \in \R^p$,
and $J(u)$ is the indicator that any individual is under risk shortly before $u$. 
For lucidity, the notion of $J(u)$ will be suppressed most of the time.
If we replace $\hat \Lambda_0(t,\vbeta)$ for $\Lambda_0$ in the score equation for $\vbeta$,
\begin{align}
\label{eq:score_beta}
  \sum_{i=1}^n \int_0^\tau \vX_i(u) Y_i(u) [ \d N_i(u) - \exp(\vX_i'(u) \vbeta) \d \Lambda_0(u)] = 0,
\end{align}
and define $\vE(t, \vbeta) = S_1(t,\vbeta) S_0^{-1}(t,\vbeta)$, 
we obtain a solvable score equation for $\vbeta$:
$$\vU_\tau(\vbeta) = \sum_{i=1}^n \int_0^\tau [ \vX_i(u) - \vE(u,\vbeta)] \d N_i(u) = 0.$$
Denoting its solution by $\hat \vbeta$, we also obtain the {\it Breslow estimator} $\hat \Lambda_0(t,\hat \vbeta)$ for the cumulative baseline hazard function. 
To explain their joint large sample properties 
we define by 
$$
\vI_\tau(\vbeta) = - D \vU_\tau(\vbeta) = \int_0^\tau \vV(u,\vbeta) \d N(u)
$$
the negative of the Jacobi-matrix of $\vU_\tau$, 
where $\vV(t, \vbeta) = S_2(t,\vbeta) S_0^{-1}(t,\vbeta) - \vE^{\otimes 2}(t,\vbeta)$
and $N = \sum_{i=1}^n N_i$. 
Recall that the covariates are assumed to be uniformly bounded. Therefore, 
it follows from Theorems~VII.2.2 and VII.2.3 of \cite{abgk93} that 
 $\sqrt{n} (\hat \vbeta - \vbeta_0)$ and  $\sqrt{n} (\hat \Lambda_0( \cdot, \hat \vbeta) - \Lambda_0(\cdot))$ are both asymptotically Gaussian as long as the following regularity conditions are fulfilled  which we assume throughout; see also Condition VII.2.1 \cite{abgk93}. Here and throughout, $\oPo$ denotes convergence in probability. 
\begin{cond}
 \label{cond:abgk}
 There exist a neighbourhood $\mc B$ of $\vbeta_0$ 
 and functions $s_0: [0,\tau] \times \mc B  \rightarrow \R$,
 $s_1: [0,\tau] \times \mc B  \rightarrow \R^p$,
 and $s_2: [0,\tau] \times \mc B  \rightarrow \R^{p \times p}$
 such that for each $k=0,1,2$:
 \begin{itemize}
  \item[(a)] $\sup_{\vbeta \in \mc B, t \in [0,\tau]} |n^{-1} S_k(t,\vbeta) - s_k(t,\vbeta) | \oPo 0;$
  \item[(b)] $s_k$ is a continuous function of $\vbeta \in \mc B$ uniformly in $t \in [0,\tau]$ and bounded on $[0,\tau] \times \mc B $;
  \item[(c)] $s_0(\cdot,\vbeta_0)$ is bounded away from zero on $[0,\tau]$;
  \item[(d)] $s_{k+1}(t,\vbeta) = \frac{\partial}{\partial \vbeta} s_{k}(t,\vbeta), k=0,1,$ for $\vbeta \in \mc B, t \in [0,\tau]$;
  \item[(e)] $\vSigma_\tau = \int_0^\tau \vv(t,\vbeta_0) s_0(t,\vbeta_0) \d \Lambda_0(t)$ is positive definite, where $\vv(t, \vbeta) = s_2(t, \vbeta) s_0^{-1}(t,\vbeta) - (s_1(t, \vbeta) s_0^{-1}(t,\vbeta))^{\otimes 2}$.
 \end{itemize}
\end{cond}
Note that (a) and (b) immediately imply convergence in probability for each $k=0,1,2$:
\begin{align}
 \label{eq:uniform_conv_sk_beta_hat}
 \sup_{t \in [0,\tau]} | S_k(t, \tilde \vbeta) - s_k(t,\vbeta_0)| \oPo 0
\end{align}
as long as $\tilde \vbeta \oPo \vbeta_0$. 
Carefully checking the proofs of the fore-mentioned theorems from \cite{abgk93}, we obtain asymptotic representations of the normalized estimators which will motivate the first bootstrap approaches in the following section:
\begin{align}\label{eq:as_expr_beta}
 \sqrt{n} (\hat \vbeta - \vbeta_0) & = \Big(\frac1n \vI_\tau(\vbeta_0)\Big)^{-1} \frac{1}{\sqrt{n}} \vU_\tau(\vbeta_0) + o_p(1)
 \\
 \sqrt{n} (\hat \Lambda_0( \cdot, \hat \vbeta) - \Lambda_0(\cdot)) 
  & = - \sqrt{n} (\hat \vbeta - \vbeta_0) \int_0^\cdot \ve(u,\vbeta_0) \d \Lambda_0(u) 
 + \sqrt{n} \int_0^\cdot S_0^{-1}(u,\vbeta_0) \d M(u) + o_p(1).\label{eq:as_expr_lambda}
\end{align}
Here, $\ve$ denotes the limit (in probability) of $\vE$, and $M(t) = \sum_{i=1}^n M_i(t) = \sum_{i=1}^n \Big( N_i(t) - \int_0^t \lambda_i(u, \vbeta_0) \d u\big)$ defines a square-integrable martingale in $t \in [0,\tau]$; cf. Section VII.2.2 in \cite{abgk93}.

\section{Wild bootstrap approaches and main theorems}

While inference about $\vbeta_0$ can be based on the asymptotic normality of its estimator (e.g., Martinussen and Scheike, 2006), the complicated limit process of the normalized Breslow estimator does not allow time simultaneous inference 
about the cumulative hazard function $\Lambda_0$ or functionals thereof (such as the survival function). To this end, we  propose two general approaches to establish asymptotically valid resampling strategies. Since $\hat \Lambda_0$ implicitly depends on $\hat \vbeta$, we have to ensure that their wild bootstrap counterparts mimic their distribution jointly.

\subsection{The `classical' wild bootstrap}
\label{sec:dirres}

The {\it first method} is inspired by the use of the wild bootstrap in \cite{beyersmann12b} and is in line with the resampling procedures of \cite{lin93} or \cite{spiekerman98} for the special choice of i.i.d. standard normal weights. 
This procedure is based on the above asymptotic representation of the normalized estimators and replaces the involved martingales $M_{i}(t) = N_i(t) - \int_0^t Y_i(u) \exp(\vX_i'(u) \vbeta_0) \d \Lambda_0(u)$ by $G_{i} N_{i}(t)$ or $G_{i}\hat M_{i}(t)$ together with plug-in estimators for all unknown quantities.
Here, $\d \hat M_i(u) = \d N_i(u) - Y_i(u) \exp(\vX_i'(u) \hat \vbeta) \d \hat \Lambda_0(u, \hat \vbeta)$
is an estimate of the martingale increment $\d M_i(u)$. 
We exemplify the idea for $G_i \d N_i(u)$: To this end, we introduce resampling versions of the score equation defining vector $\vU_\tau$ and the negative Jacobi matrix $\vI_\tau$:
\begin{align}
 \label{eq:wbs1-3}
  \vU_\tau^*(\hat \vbeta) = & \sum_{i=1}^n G_i \int_0^\tau (\vX_i(t) - \vE(t,\hat\vbeta)) \intd N_{i}(t) \text{ and} \\
  \label{eq:wbs1-4}
  \quad \frac1n \vI_\tau^*(\hat \vbeta) = & \frac1n \sum_{i=1}^n G_i^2 \int_0^\tau (\vX_i(t) - \vE(t,\hat \vbeta))(\vX_i(t)- \vE(t,\hat \vbeta))'  dN_{i}(t).
\end{align}
Following the above instruction we obtain from the asymptotic representations \eqref{eq:as_expr_beta}--\eqref{eq:as_expr_lambda} the following wild bootstrap counterparts of the normalized estimators:
\begin{align}
 \label{eq:wbs1-2}
 \sqrt{n} (\hat \vbeta^* - \hat \vbeta)  := & \Big(\frac1n \vI_\tau^*(\hat \vbeta)\Big)^{-1} \frac{1}{\sqrt{n}} \vU_\tau^*(\hat \vbeta), \\
 \label{eq:wbs1-2a}
 \sqrt{n} (\hat \Lambda_0^*(\cdot, \hat \vbeta^*) - \hat \Lambda_0(\cdot, \hat \vbeta)) = & - \sqrt{n} (\hat \vbeta^* - \hat \vbeta)' \int_0^\cdot \vE(u,\hat \vbeta) \hat \Lambda(\d u, \hat \vbeta) 
 \ \ + \ \ \sqrt{n} \sum_{i=1}^n G_i \int_0^\cdot S_0^{-1}(u, \hat \vbeta) \d N_i(u). 
\end{align}
Alternatively, the \cite{spiekerman98}-type martingale increment estimates $G_i \d \hat M_{i}(u)$ may replace $G_i \d N_i(u)$ 
in \eqref{eq:wbs1-3} and \eqref{eq:wbs1-2a}.
A bootstrap-type covariance estimate similar to \eqref{eq:wbs1-4} has been suggested by \cite{dobler14} in a nonparametric competing risks context. Here, it is additionally motivated from martingale arguments: 
defining $\vI_t^*$ and $\vU_t^*$ as in \eqref{eq:wbs1-3} and \eqref{eq:wbs1-4} with $\tau$ replaced with $t$, it turns out 
that $(I_t^*(\hat \vbeta))_{t \in [0,\tau]}$ is the optional variation process of the square-integrable {\it martingale} $(n^{-1/2} \vU_t^*(\hat \vbeta))_{t \in [0,\tau]}$;
see the appendix for details. To motivate a different resampling strategy, we finally note that both wild bootstrap procedures ignore the $o_p(1)$-terms in the asymptotic expansions (\ref{eq:as_expr_beta}) -- (\ref{eq:as_expr_lambda}).

\subsection{Wild bootstrapping the score equations}
\label{sec:esteq}

A {\it second}, possibly more natural wild bootstrap approach does not ignore the  $o_p(1)$ terms.
The idea is to replace martingale representations of score equations
with their multiplier counterparts. To this end, paralleling the approach of jointly solving two score equations to find the estimators for the parametric as well as the nonparametric model components, 
we first expand the score equation in \eqref{eq:score_baseline} to 
$\sum_{i=1}^n \d M_i(t) + \sum_{i=1}^n Y_i(t) [ \exp( \vX'_i(t)\vbeta_0 ) - \exp( \vX'_i(t)\vbeta ) ] \d \Lambda_0(t) = 0.$ 
A wild bootstrap counterpart thereof is now given by replacing $\d M_i(t)$ with $G_i \d N_i(t)$, $\vbeta_0$ with $\hat \vbeta$,
and $\Lambda_0(t)$ with $\hat \Lambda_0(t, \hat \vbeta)$:
\begin{align}
 & \sum_{i=1}^n G_i \d N_i(t) + \sum_{i=1}^n Y_i(t) [ \exp( \vX'_i(t) \hat \vbeta ) - \exp( \vX'_i(t)\vbeta ) ] \d \hat \Lambda_0(t, \hat \vbeta) \\
 & = \sum_{i=1}^n (G_i+1) \d N_i(t) - S_0(t,\vbeta) \d \hat \Lambda_0(t, \hat \vbeta) = 0.\nonumber
\end{align}
Now, keeping $\vbeta$ fixed, the ``solution'' for $\hat \Lambda_0(t, \hat \vbeta)$ is clearly
$$ \hat \Lambda_0^*(t,\vbeta) = \sum_{i=1}^n (G_i + 1) \int_0^t S_0^{-1}(t,\vbeta) \d N_i(t). $$
Next, to find an appropriate wild bootstrap version of $\hat \vbeta$,
we consider a martingale representation of the score equation~\eqref{eq:score_beta} for $\vbeta$:
\begin{align}
 \nonumber
 \sum_{i=1}^n \int_0^\tau \vX_i(t) Y_i(t) \d M_i(t) 
 + \sum_{i=1}^n \int_0^\tau \vX_i(t) Y_i(t) [\exp(\vX'_i(t) \vbeta_0) - \exp(\vX'_i(t) \vbeta)] \d \Lambda_0(t) = 0.
\end{align}
Again, a wild bootstrap version thereof is given by 
\begin{align*}
 & \sum_{i=1}^n G_i \int_0^\tau \vX_i(t) Y_i(t) \d N_i(t) 
 + \sum_{i=1}^n \int_0^\tau \vX_i(t) Y_i(t) [\exp(\vX'_i(t) \hat \vbeta) - \exp(\vX'_i(t) \vbeta)] \d \hat \Lambda_0(t,\hat \vbeta) \\
 & = \sum_{i=1}^n \int_0^\tau [ \vX_i(t) Y_i(t) G_i + \vE(t,\hat \vbeta)] \d N_i(t) 
 - \int_0^\tau S_1(t,\vbeta) \d \hat \Lambda_0(t,\hat \vbeta) = 0.
\end{align*}
Inserting $ \hat \Lambda_0^*(t,\vbeta)$ for $\hat \Lambda_0(t, \hat \vbeta)$ eventually yields the final {\it wild bootstrap score equation}
\begin{align}
 \vU_\tau^*(\vbeta) & = \sum_{i=1}^n \int_0^\tau [ \vX_i(t) Y_i(t) G_i + \vE(t,\hat \vbeta)] \d N_i(t) 
 - \int_0^\tau \vE(t,\vbeta) \sum_{i=1}^n (G_i+1) \d N_i(t) \nonumber \\
 & = \sum_{i=1}^n (G_i+1) \int_0^\tau [\vX_i(t) - \vE(t,\vbeta)] \d N_i(t) \stackrel{!}{=} 0. \label{eq:score_beta_wbs}
\end{align}
The last equality is due to $\sum_{i=1}^n \int_0^\tau [\vX_i(t) - \vE(t,\hat \vbeta)] \d N_i(t) = \vU_\tau(\hat \vbeta)=0$.
Define $\hat \vbeta^*$ as the solution of~\eqref{eq:score_beta_wbs} and note that $\vU_\tau^*(\hat \vbeta)$ coincides with formula \eqref{eq:wbs1-3}.
In almost the same way as in the proof of Theorem~VII.2.1 in \cite{abgk93} it can be shown
that the probability of the existence of $\hat \vbeta^*$ tends to one and that 
$\hat \vbeta^* - \hat \vbeta$ (conditionally) converge to zero in probability;
see also the proof of Theorem~\ref{thm:wbs} below for similar arguments.

Finally, a wild bootstrap version of the Breslow estimator is obtained via $\hat \Lambda_0^*(\cdot, \hat \vbeta^*)$ with  normalized version
\begin{align}
 \label{eq:breslow_wbs}
 \sqrt{n} (\hat \Lambda_0^*(t, \hat \vbeta^*) - \hat \Lambda_0(t, \hat \vbeta)) 
 & = \sqrt{n} \sum_{i=1}^n (G_i+1) \int_0^t [S_0^{-1}(u,\hat \vbeta^*) - S_0^{-1}(u, \hat \vbeta)] \d N_i(u)  
  +  \sqrt{n} \sum_{i=1}^n G_i \int_0^t S_0^{-1}(u,\hat \vbeta) \d N_i(u)
 \\
 & = \sqrt{n} \sum_{i=1}^n \int_0^t [S_0^{-1}(u,\hat \vbeta^*) - S_0^{-1}(u, \hat \vbeta)] \d N_i(u)
 + \sqrt{n} \sum_{i=1}^n G_i \int_0^t S_0^{-1}(u,\hat \vbeta^*) \d N_i(u).
 \nonumber
\end{align}
A Taylor expansion around $\hat \vbeta$ of the first term on the far right-hand side
and the martingale property of the second term reveal the striking similarity to decomposition~\eqref{eq:wbs1-2a}.
However, the current wild bootstrap approach does not ignore the $o_p(1)$ term resulting from the Taylor expansion. 
Another nice property of this ``estimating equation'' approach is the similar treatment for bootstrap and original estimator which is in line with general recommendations for constructing resampling algorithms \citep{beran1991asymptotic, efron1994introduction}. 
As above we have by the mean value theorem (cf. \citealt{feng13})
\begin{align*}
 - \vU_\tau^*(\hat \vbeta) = \vU_\tau^*(\hat\vbeta^*) - \vU_\tau^*(\hat \vbeta) = \tilde D \vU_\tau^*(\tilde \vbeta) (\hat \vbeta^* - \hat \vbeta),
\end{align*}
where $\tilde D \vU_\tau^*(\tilde \vbeta) = (\nabla (\vU_\tau^{*})^{(1)} (\vbeta_1)', \dots, \nabla (\vU_\tau^{*})^{(p)} (\vbeta_p)')' $
and each $\vbeta_i$ is on the line segment between $\hat\vbeta^*$ and $\hat\vbeta$.

%
%
%
%

%
\subsection{Consistency and confidence bands for the cumulative hazard}
\label{sec:cbs}
%
To prove (asymptotic) validity of both resampling strategies (based on asymptotic expansions or score equations) the following result is needed.
\begin{lemma}
\label{lemma:prelim}
Under Conditions~\ref{cond:abgk}(a)-(e) it holds that, given all observations,
\begin{enumerate}
 \item $n^{-1/2} \vU_\tau^*(\hat \vbeta)$ is asymptotically normally distributed,
 \item $- \frac1n \tilde D \vU_\tau^*(\tilde \vbeta) \oPo \Sigma$ whenever $\tilde \vbeta \oPo \vbeta_0$,
 \item $\| \hat \vbeta^* - \hat \vbeta \| \oPo 0$
\end{enumerate}
 as $n \rightarrow \infty$
 in probability if the resampling is done via method~\eqref{eq:wbs1-2} or \eqref{eq:score_beta_wbs}. 
\end{lemma}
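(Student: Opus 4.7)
The overall plan is to condition on the $\sigma$-algebra $\mathcal{F}$ generated by the counting processes $N_i$, at-risk indicators $Y_i$ and covariates $\vX_i$, so that the multipliers $G_1,\dots,G_n$ remain the only source of randomness in the bootstrap quantities. Each resampled statistic then becomes a sum of conditionally independent random vectors with $\mathcal{F}$-measurable coefficients, and I would deduce the required convergences by combining the data-level limits from Condition~\ref{cond:abgk} with standard conditional CLT and LLN arguments on the $G_i$. The passage from $L^2$-type bounds to the conditional-in-probability statements ``given all observations'' is handled by the elementary observation that if $\E[X_n \mid \mathcal{F}] \oPo c$ and $\mathrm{Var}(X_n \mid \mathcal{F}) \oPo 0$, then $X_n \oPo c$ in the sense meant by the lemma.

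For part~1, I would first note that $\vU_\tau(\hat\vbeta) = 0$ forces methods~\eqref{eq:wbs1-2} and~\eqref{eq:score_beta_wbs} to coincide at $\hat\vbeta$, so $\vU_\tau^*(\hat\vbeta) = \sum_{i=1}^n G_i a_i$ with $a_i := \int_0^\tau [\vX_i(t)-\vE(t,\hat\vbeta)]\,\d N_i(t)$. Given $\mathcal{F}$, this is a sum of independent centred random vectors whose coefficients are uniformly bounded because the $\vX_i$ lie in $F$ and $\vE$ is a convex combination of the $\vX_i$. Reducing to $\R$ via Cram\'er--Wold and applying the conditional Lindeberg--Feller CLT should then give the claim: the conditional covariance is $n^{-1}\sum_{i=1}^n a_i a_i'$, which by the martingale computation underlying Section~VII.2 of \cite{abgk93} together with~\eqref{eq:uniform_conv_sk_beta_hat} converges in probability to $\vSigma_\tau$, while Lindeberg's condition follows from the uniform boundedness of $a_i/\sqrt{n}$ and the standing moment assumptions on $G_i$.

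For part~2, using $\partial_{\vbeta}\vE(t,\vbeta) = \vV(t,\vbeta)$, I would write the $k$-th row of $-\tilde D\vU_\tau^*(\tilde\vbeta)$ as $\sum_{i=1}^n (G_i+1)\int_0^\tau [\vV(t,\vbeta_k)]_{k,\cdot}\,\d N_i(t)$, with $\vbeta_k$ on the segment between $\hat\vbeta^*$ and $\hat\vbeta$. Continuity of $\vv$ from Condition~\ref{cond:abgk}(b) together with~\eqref{eq:uniform_conv_sk_beta_hat} lets me replace each $\vV(t,\vbeta_k)$ by $\vv(t,\vbeta_0)$ uniformly in $t$ with an $o_p(1)$ error. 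It then remains to analyse $n^{-1}\int_0^\tau \vv(t,\vbeta_0)\,\d N^{(G)}(t)$ with $N^{(G)} := \sum_{i=1}^n(G_i+1)N_i$; the conditional mean equals $n^{-1}\int_0^\tau\vv(t,\vbeta_0)\,\d N(t)$, which tends in probability to $\vSigma_\tau$ by Condition~\ref{cond:abgk}, and the conditional variance contributed by the $G_i$ is of order $n^{-1}$.

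For part~3 I would separate the two schemes. In the classical method~\eqref{eq:wbs1-2}, $\hat\vbeta^*$ is defined explicitly, so $\hat\vbeta^*-\hat\vbeta = (\vI_\tau^*(\hat\vbeta)/n)^{-1}\,\vU_\tau^*(\hat\vbeta)/n$ and part~1 combined with the one-line moment computation $\vI_\tau^*(\hat\vbeta)/n \oPo \vSigma_\tau$ already yields $\sqrt{n}(\hat\vbeta^*-\hat\vbeta) = O_p(1)$, hence the claim. In the score-equation method~\eqref{eq:score_beta_wbs}, $\hat\vbeta^*$ is implicit, so I must first establish its existence with probability tending to one by mimicking the proof of Theorem~VII.2.1 in \cite{abgk93}. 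Decomposing $n^{-1}\vU_\tau^*(\vbeta) = n^{-1}\vU_\tau(\vbeta) + n^{-1}\sum_{i=1}^n G_i \int_0^\tau[\vX_i(t)-\vE(t,\vbeta)]\,\d N_i(t)$ and noting that the $G$-term has conditional variance of order $n^{-1}$ uniformly in $\vbeta$ on a compact neighbourhood of $\vbeta_0$, I get uniform in-probability convergence of $n^{-1}\vU_\tau^*(\vbeta)$ to the same deterministic limit as $n^{-1}\vU_\tau(\vbeta)$, whose unique root is $\vbeta_0$. A standard continuity argument then locates a root $\hat\vbeta^*$ near $\vbeta_0$ with high probability, and the mean-value expansion displayed just before the lemma, together with parts~1 and~2, upgrades this to $\sqrt{n}(\hat\vbeta^*-\hat\vbeta)=O_p(1)$. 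I expect the principal obstacle to be precisely this last step: transplanting the ABGK consistency proof to the bootstrap score equation while tracking that the ``in probability'' convergence is the correct (conditional) one and that the implicit root exists uniformly across data realisations.
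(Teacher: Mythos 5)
Your proposal is essentially correct for parts 1 and 2, but it takes a genuinely different route from the paper. You condition on the data $\sigma$-algebra and exploit that the bootstrap statistics are then sums of independent, centred, boundedly-weighted multipliers, so that a conditional Lindeberg--Feller CLT (via Cram\'er--Wold) gives part 1 and conditional mean/variance (Chebyshev) computations give part 2. The paper instead enlarges the filtration to $\mc F_t = \sigma\{N_i(s), Y_i(s), \vX_i(s), G_i N_i(v): s\le\tau, v\le t\}$ so that processes of the form $t\mapsto\sum_i G_i\int_0^t k_i\,\d N_i$ become square-integrable martingales, and then runs the \emph{same} machinery as for the original estimators: Lenglart's inequality for the negligibility statements, Rebolledo's martingale CLT for part 1, and the subsequence principle to convert almost-sure conditional statements back to conditional convergence in probability. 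Your approach is more elementary and self-contained for the fixed-endpoint quantities the lemma actually asserts; the paper's approach buys reusability --- the identical martingale structure is what later delivers the process-level (Skorokhod-space) convergence of $W^*$ and the Breslow bootstrap in Theorem~\ref{thm:wbs}, and it is the methodological point the authors advertise. (Your observation that $\vU_\tau(\hat\vbeta)=0$ makes the two schemes coincide at $\hat\vbeta$, and your separate explicit treatment of the classical scheme in part 3, both match remarks in the paper.)

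There is one soft spot in your part 3 for the score-equation scheme. Uniform in-probability convergence of $n^{-1}\vU_\tau^*(\vbeta)$ to a deterministic function with a unique root at $\vbeta_0$ does \emph{not} by itself guarantee that the finite-sample score has a root near $\vbeta_0$; a ``standard continuity argument'' needs additional structure. The paper supplies this by working with the potential $C_t^*(\vbeta)$ satisfying $\nabla C_\tau^*(\vbeta)=\vU_\tau^*(\vbeta)$, showing pointwise conditional convergence of the (concave) process $X^*(\cdot,\vbeta)=n^{-1}(C_\tau^*(\vbeta)-C_\tau^*(\hat\vbeta))$ to a strictly concave limit $f$ maximized at $\vbeta_0$, and invoking Theorem~II.1 of \cite{andersen1982} to upgrade pointwise to uniform convergence and convergence of the maximizer. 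This concavity/argmax step is also what rescues the uniformity over $\vbeta$ that you assert from pointwise variance bounds alone. You do point to Theorem~VII.2.1 of \cite{abgk93} --- which is exactly this argument --- and you correctly identify this step as the principal obstacle, so the fix is to carry out that concavity argument explicitly rather than relying on uniform convergence of the score plus continuity.
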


The next theorem constitutes that both resampling approaches utilizing the \cite{lin93} approach (i.e. with $G_i \d N_i$), have the correct asymptotic behaviour. Therein, $d$ denotes a distance that metrizes weak convergence on $\R^p \times D[0,\tau]$, e.g. the Prohorov distance \citep{dudley2002real}, and 
$\mathcal{L}(T)$ and $\mathcal{L}(T|\text{data})$ are the unconditional and conditional distribution of a random variable $T$, respectively.

\begin{thm}
\label{thm:wbs}
 Under Condition~\ref{cond:abgk} it holds for both resampling strategies \eqref{eq:wbs1-2} or \eqref{eq:score_beta_wbs} that 
 the asymptotic distributions of  $\sqrt{n} (\hat\vbeta^* - \hat \vbeta, \hat\Lambda^*_0 - \hat \Lambda_0)$ and 
 $\sqrt{n} (\hat\vbeta - \vbeta, \hat\Lambda_0 - \Lambda_0)$ coincide, i.e. 
\begin{equation}
d \left( \mathcal{L}\Big(\sqrt{n}\big(\hat\vbeta^* - \hat \vbeta, \hat\Lambda^*_0 - \hat \Lambda_0\big)| \text{data}\Big) ,
 \mathcal{L}\big(\sqrt{n}\big(\hat\vbeta - \vbeta, \hat\Lambda_0 - \Lambda_0\big)\big)\right) \oPo 0
\end{equation}
as $n \rightarrow \infty$, where $\hat\Lambda_0(t) = \hat\Lambda_0(t, \hat \vbeta)$ and $\hat\Lambda^{*}_0(t) = \hat{\Lambda}^{*}_0(t, \hat{\vbeta}^{*})$.
\end{thm}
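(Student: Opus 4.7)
The strategy is to derive, for each of the two resampling schemes, an asymptotic expansion of $\sqrt{n}(\hat\vbeta^* - \hat\vbeta,\ \hat\Lambda_0^*(\cdot,\hat\vbeta^*) - \hat\Lambda_0(\cdot,\hat\vbeta))$ that mirrors \eqref{eq:as_expr_beta}--\eqref{eq:as_expr_lambda}, and then to verify that the pair of quantities driving those expansions converges, conditionally on the data and in probability, to the same centered Gaussian limit as its unconditional counterpart. The conclusion is then immediate from the continuous mapping theorem applied to the linear functional $(u,v) \mapsto (\Sigma^{-1}u,\ -(\Sigma^{-1}u)'\!\int_0^\cdot \ve\,\d\Lambda_0 + v)$, whose image of the unconditional pair in \eqref{eq:as_expr_beta}--\eqref{eq:as_expr_lambda} is exactly $\sqrt{n}(\hat\vbeta - \vbeta_0,\ \hat\Lambda_0 - \Lambda_0)$ up to $o_p(1)$.

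\emph{Expansion for $\hat\vbeta^*$.} I would first show that, for both schemes,
\begin{equation*}
\sqrt{n}(\hat\vbeta^* - \hat\vbeta) = \Sigma^{-1}\, n^{-1/2}\vU_\tau^*(\hat\vbeta) + o_p(1).
\end{equation*}
For the classical scheme this follows from \eqref{eq:wbs1-2} once $\tfrac{1}{n}\vI_\tau^*(\hat\vbeta)\oPo\Sigma$, which in turn follows from \eqref{eq:wbs1-4} via \eqref{eq:uniform_conv_sk_beta_hat} and the law of large numbers applied to $G_i^2$. For the score equation scheme, Lemma~\ref{lemma:prelim}(3) renders the mean value identity displayed just before Lemma~\ref{lemma:prelim} available, giving $\sqrt{n}(\hat\vbeta^* - \hat\vbeta) = [-\tfrac{1}{n}\tilde D\vU_\tau^*(\tilde\vbeta)]^{-1} n^{-1/2}\vU_\tau^*(\hat\vbeta)$ on the event (of probability tending to one) that $\hat\vbeta^*$ exists, and Lemma~\ref{lemma:prelim}(2) replaces the matrix factor by $\Sigma^{-1}$.

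\emph{Expansion for $\hat\Lambda_0^*$.} For the classical scheme the representation \eqref{eq:wbs1-2a} is already in the desired form. For the score equation scheme I would Taylor expand $S_0^{-1}(u,\hat\vbeta^*)$ around $\hat\vbeta$ in the first sum of \eqref{eq:breslow_wbs}, use $\sum_{i=1}^n S_0^{-1}(u,\hat\vbeta)\,\d N_i(u) = \d \hat\Lambda_0(u,\hat\vbeta)$, and then invoke \eqref{eq:uniform_conv_sk_beta_hat} together with the $\hat\vbeta^*$ expansion above to obtain, uniformly in $t \in [0,\tau]$,
\begin{equation*}
\sqrt{n}(\hat\Lambda_0^*(t,\hat\vbeta^*) - \hat\Lambda_0(t,\hat\vbeta)) = -\sqrt{n}(\hat\vbeta^* - \hat\vbeta)'\!\int_0^t \ve(u,\vbeta_0)\,\d\Lambda_0(u) + \sqrt{n}\sum_{i=1}^n G_i \int_0^t S_0^{-1}(u,\hat\vbeta)\,\d N_i(u) + o_p(1),
\end{equation*}
which is the exact analogue of \eqref{eq:as_expr_lambda}.

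\emph{Joint convergence and main obstacle.} Combining the two expansions, it suffices to prove that, conditionally on the data and in probability, the $\R^p \times D[0,\tau]$-valued pair
\begin{equation*}
\Bigl(n^{-1/2}\vU_\tau^*(\hat\vbeta),\ \sqrt{n}\sum_{i=1}^n G_i \int_0^\cdot S_0^{-1}(u,\hat\vbeta)\,\d N_i(u)\Bigr)
\end{equation*}
converges weakly to the same centered Gaussian law as the unconditional pair $(n^{-1/2}\vU_\tau(\vbeta_0),\ \sqrt{n}\int_0^\cdot S_0^{-1}(u,\vbeta_0)\,\d M(u))$ appearing in \eqref{eq:as_expr_beta}--\eqref{eq:as_expr_lambda}. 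The bootstrap pair is the terminal (respectively, generic) value of a single vector-valued square-integrable martingale in the filtration $\mathcal F_t^* = \sigma(\text{data})\vee\sigma(G_i\mathbf{1}\{T_i\le t\}:i\le n)$, exactly as pointed out in Section~\ref{sec:dirres} for $\vU_t^*$. I would then verify the hypotheses of Rebolledo's martingale central limit theorem: the predictable/optional variations are of the form $n^{-1}\sum_i G_i^2 \int_0^t(\cdot)\,\d N_i(u)$, and \eqref{eq:uniform_conv_sk_beta_hat} combined with the law of large numbers for $G_i^2$ (using $\E G_i^2 = 1$) identifies the limit as the covariance structure of the original Gaussian limit; the Lindeberg condition is immediate from boundedness of $F$ and $\E G_1^2 < \infty$. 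The main obstacle is precisely the conditional tightness of the $D[0,\tau]$ component, and this is where the martingale reformulation pays off: Rebolledo's theorem delivers tightness as a by-product of the convergence of the predictable variation together with the Lindeberg step, so none of the usual empirical-process or bracketing arguments employed in \cite{lin93} or \cite{spiekerman98} is required. Lemma~\ref{lemma:prelim}(1) covers the marginal law of the first component, and the continuous mapping step described in the first paragraph transports the joint convergence to the claimed limit.
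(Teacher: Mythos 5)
Your proposal follows essentially the same route as the paper's proof: a Taylor/mean-value expansion of the bootstrapped Breslow estimator around $\hat\vbeta$, the representation of $\sqrt{n}(\hat\vbeta^*-\hat\vbeta)$ through Lemma~\ref{lemma:prelim}, and a joint application of Rebolledo's martingale central limit theorem to the $\R^p\times D[0,\tau]$-valued martingale $t\mapsto(n^{-1/2}\vU_t^*(\hat\vbeta),\,W^*(t))$, with conditional tightness obtained as a by-product and the cross-variation computation identifying the limiting covariance (the paper makes the asymptotic independence of the two components explicit). The only small inaccuracy is that the form $n^{-1}\sum_i G_i^2\int(\cdot)\,\d N_i$ you quote is the \emph{optional} variation — the predictable variation the paper works with carries no $G_i^2$ — but either may be used in Rebolledo's theorem, so this does not affect the argument.
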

The asymptotic variance function $t \mapsto \sigma^2(t)$ of $\sqrt{n}(\hat\Lambda_0 - \Lambda_0)$ (and thus also of 
$\sqrt{n}(\hat\Lambda^*_0 - \hat \Lambda_0)$) can be found in \citet[Corollary~VII.2.4]{abgk93}, where also a consistent estimator $\hat \sigma^2(t)$ is given. 
In our simulation study in Section~\ref{sec:simus}, our choice of a wild bootstrap counterpart of $\hat \sigma^2(t)$ was the empirical variance function of the obtained wild bootstrap realizations of $\sqrt{n}(\hat\Lambda^*_0 - \hat \Lambda_0)$. 
We also studied variance estimators based on direct resampling of $\hat{\sigma}^2$  involving squared multipliers (results not shown) as proposed in \cite{dobler14}. However, the empirical versions performed preferably.

The theorem is proven in the Appendix. Here, we use it to construct time-simultaneous confidence bands for $\Lambda_0$ on fixed intervals $I=[t_1,t_2]\subset[0,\tau]$. In particular, we obtain results similar  to those of \cite{lin1994confidence}: denoting by $\phi$ a continuously differentiable function 
we get confidence bands of asymptotic level $1-\alpha$ for $\Lambda_0$ on $I$ as 
$$
\phi^{-1} [\phi(\hat \Lambda_0(t,\hat \vbeta)) \mp c^*_\phi(\alpha)/g_n(t)],
$$
where $g_n: I \rightarrow (0,\infty)$ is a possibly random weight function. Typical choices are 
$$
g_n^{(1)}(t) =  \sqrt{n}/\widehat{\sigma}(t) \quad \text{and}\quad g_n^{(2)}(t) =  \sqrt{n}/(1+\widehat{\sigma}^2(t))
$$
in case of the transformation $\phi_1(x) = x$, and
$$
\tilde g_n^{(1)}(t) =  \sqrt{n}\hat\Lambda_0(t,\hat \vbeta)/\widehat{\sigma}(t) \quad \text{and}\quad \tilde g_n^{(2)}(t) =  \sqrt{n}\hat\Lambda_0(t,\hat \vbeta)/(1+\widehat{\sigma}^2(t))
$$
for the transformation $\phi_2(x) = \log(x)$. The resulting confidence bands correspond to the so-called {\it equal precision} (for $g_n^{(1)}$ or $\tilde g_n^{(1)}$) and {\it Hall-Wellner bands} (for $g_n^{(2)}$ or $\tilde g_n^{(2)}$), respectively. 
Finally, the value of $c^*_\phi(\alpha) = c^*_{\phi_1}(\alpha)$ has been chosen as the
$(1-\alpha)$ quantile of the conditional distribution of
$\sup_{t\in I}  g^*(t) |\hat \Lambda^*(t,\hat \vbeta) - \hat \Lambda(t,\hat \vbeta)|$,
and the na\"ive choice for $c^*_{\phi_2}(\alpha)$ would have been the corresponding quantile of
$\sup_{t\in I} \tilde g^*(t) |\log(\hat \Lambda^*(t)) - \log(\hat \Lambda(t))|$.
Here, $g^*(t)$ and $\tilde g^*(t)$ are the wild bootstrap analogues of $g_n^{(j)}(t)$ and $\tilde g_n^{(j)}(t)$, respectively, $j \in \{1,2\}$.
However, this choice of $c^*_{\phi_2}(\alpha)$ results in some numerical instabilities, 
which is why we preferred the asymptotically equivalent choice 
$c^*_{\phi_2}(\alpha) = c^*_{\phi_1}(\alpha)$.

Here, the ``wild bootstrap analogues'' refer to the use of $\hat{\Lambda}^*_0$ for any of the bootstrap strategies~\eqref{eq:wbs1-2} or \eqref{eq:score_beta_wbs}, and its corresponding empirical variances.
It follows from Theorem~\ref{thm:wbs} that all confidence bands are valid for large sample sizes. 
To additionally asses their small sample properties, we compare them in  Monte-Carlo simulations in Section~\ref{sec:simus}. 
There, we also analyze the analogue behaviour of the resampling approaches based on $\d \hat M_i$.

%
\subsection{Extensions to more general models and more on inference}
%

After having carefully checked the arguments used to establish the wild bootstrap consistency for the Cox survival model~\eqref{eq:cox}, it is apparent that the same approach directly carries over to more general models in multi-state set-ups. 
In particular, as long as the counting process martingale methods can be mimicked with the help of wild bootstrap multipliers,  
 the asymptotics of the resampled estimators can be argued in almost the same way as for the original estimators. 
Thus, the above methodology can straightforwardly be extended to multi-state models with $K$ states and multiplicative intensity processes 
\begin{equation}
\label{eq:multi}
 \lambda_{ih}(t, \vtheta) = Y_{ih}(t) \lambda_{h0}(t,\gamma) \exp(\vX'_{i}(t) \vbeta_0)
\end{equation}
for each transition $h=1,\dots,K(K-1)$, where $\vtheta = (\gamma,\vbeta_0')'$. 
Different to above this model allows for an arbitrary number of transitions between different states. However, following \cite{dobler2016nonparametric} and \cite{bluhmki18b}, the above wild bootstrap approach can also be applied here. 
The only major change is to replace the currently used multipliers $G_i$ by more general white noise processes $(G_{ih}(u))_u$ with zero mean and unit variance \citep{bluhmki18b} to randomly weight the increments of the counting processes, leading to $G_{ih}(u) \d N_{ih}(u) $. Since the martingale concept is still working in this case it can again be shown 
that the wild bootstrap mimics the joint limit distribution of the parameter and 
multivariate hazard transition estimators. 
Indeed, \cite{dobler2016nonparametric} and \cite{bluhmki18b} have shown that, for different transitions $h$ and $h'$ and thus independent white noise processes $(G_{ih}(u))_u$ and $(G_{ih'}(u))_u$, 
the processes $\int_0^t f_{ih}(u) G_{ih}(u) \d N_{ih}(u)$ and $\int_0^t f_{ih'}(u) G_{ih'}(u) \d N_{ih'}(u)$
define orthogonal square-integrable martingales in $t$ with respect to the filtration
$$ (\mc F_t := \sigma\{ Y_{ih}(u), N_{ih}(u), Y_{ih'}(u), N_{ih'}(u): 0 \leq u \leq \tau; \ G_{ih}(v), G_{ih'}(v): 0 \leq v \leq t \} )_t. $$
Here, $f_{ih}$ and $f_{ih'}$ are predictable random functions with respect to this filtration, i.e. in particular, they may be data-dependent.
The predictable variation processes of the above martingales are
$\int_0^t f_{ih}^2(u) \d N_{ih}(u)$ and $\int_0^t f_{ih}^2(u) \d N_{ih'}(u)$.
This property nicely reflects the situation for the original estimators, 
as the corresponding counting process martingales
$\int_0^t f_{ih}(u) \d M_{ih}(u)$ and $\int_0^t f_{ih'}(u) \d M_{ih'}(u)$
are orthogonal and square-integrable as well with predictable variation processes
$$\int_0^t f_{ih}^2(u) Y_{ih}(u) \lambda_{h0}(u, \gamma) \exp(\vX'_i(u) \vbeta_0)\d u \quad \text{and}\quad \int_0^t f_{ih}^2(u) \lambda_{h'0}(u, \gamma) \exp(\vX'_i(u) \vbeta_0)\d u. $$
In this sense, not only the wild bootstrap martingales resemble the original counting process martingales 
well but also the predictable variation processes of the wild bootstrap martingales 
are estimates of the original predictable variation processes.
Using these findings in combination with the arguments presented in the proofs in the appendix,
it is apparent that also in such more general multi-state set-ups the arguments for the large-sample properties of the estimators easily transfer to their wild bootstrap versions, as long as the original estimators allow for martingale representations.
Therefore, these arguments even extend to more general models such as the Cox-Aalen multiplicative-additive intensity model \citep{scheike02} or the \cite{fine99} model for subdistribution functions.

Also, the incorporation of certain filtered (e.g., right-censored) observations is again allowed and this yields several important inferential applications: apart from confidence bands for cumulative transition hazards or incidence functions (which are functionals thereof), tests for null hypotheses formulated in terms the parameters can be constructed as well. 
Here, new  bootstrap-based versions of score or Wald-type test statistics \citep{martinussen06} may be employed to ensure a proper finite sample behaviour. However, 
a detailed evaluation of all these applications would need additional extensive simulations and further elaborations. As a matter of lucidity, we leave them to 
future research and we focus below on the simple Cox model \eqref{eq:cox} to exploit the impact of the proposed methods in  simulations.

\section{Simulation study}
\label{sec:simus}
%

To compare the performances of the various resampling approaches described in
Section~\ref{sec:cbs}, we conducted a simulation study in which we covered
situations of small to large sample sizes: $n= 100, 200, 400$.  The generated
data follow the Cox survival model with baseline hazard rate $\lambda_0 \equiv
1$, one-dimensional covariates $X_i \stackrel{\text{i.i.d.}}{\sim}
\mc{N}(0,16)$ which are normally distributed with standard deviation 4, and
regression parameter $\beta=0.3$.  The censoring times are the minima of $\tau = 3$ and
standard exponentially distributed random variables.  The considered time
interval, along which 95\% confidence bands for the cumulative baseline hazard
function shall be constructed, was $[t_1,t_2] = [0.5,3]$.  Here we chose the
start time of $t_1=0.5$ because ``the approximations tend to be poor for
$t$ close to 0'' \citep[p. 77]{lin1994confidence}.  As wild bootstrap
multipliers $G_1, \dots, G_n$, we considered the common choice $G_i
\stackrel{\text{i.i.d.}}{\sim} \mc{N}(0,1)$, as well as centered unit Poisson
variables $G_i \stackrel{\text{i.i.d.}}{\sim} Poi(1) - 1$ with unit skewness,
and centered unit exponential variables $G_i \stackrel{\text{i.i.d.}}{\sim}
Exp(1) - 1$ which have a skewness of 2.  We simulated all the confidence bands
for the cumulative baseline hazard function that were introduced in
Section~\ref{sec:cbs}, i.e.\  $\log$- and non transformed Hall-Wellner and equal
precision bands.  In particular, we also considered both resampling approaches
in which the martingale increments $\d M_i$ were replaced with $G_i \d N_i$ or
$G_i \d \widehat M_i$, denoted in Tables~\ref{tab:norm}--\ref{tab:pois} as
``$\d N$'' and ``$\d M$'', respectively, and also both kinds of resampling algorithms,
the direct resampling method of Section~\ref{sec:dirres} and the method of
Section~\ref{sec:esteq} in which the estimating equations were bootstrapped.
All of these bands were compared with the confidence band for $\Lambda_0$ that
one obtains from the \texttt{cox.aalen} function in the \texttt{R} package
\emph{timereg}.  For each considered set-up and type of band, we constructed
10,000 confidence bands, each of which was based on 999 wild bootstrap
iterations.
\begin{table}[ht]
\centering
\begin{tabular}{|cc|c|c|c|c|c|c|c|c|c|}
  \hline
  &  &  & \multicolumn{4}{|c|}{Hall-Wellner} & \multicolumn{4}{|c|}{equal precision}  \\ 
   &  & timereg & \multicolumn{2}{|c|}{estimating} & \multicolumn{2}{|c|}{direct} & \multicolumn{2}{|c|}{estimating} & \multicolumn{2}{|c|}{direct} \\ 
   & resampling & standard & \multicolumn{2}{|c|}{equation} & \multicolumn{2}{|c|}{resampling} & \multicolumn{2}{|c|}{equation} & \multicolumn{2}{|c|}{resampling} \\ 
  $n$ & approach & band & id & log & id & log & id & log & id & log \\ \hline
  100 & $\d N$ & 89.5 & 88.6 & 95.5 & 88.8 & 95.6 & 89.5 & 96.7 & 88.8 & 96.3 \\ 
      & $\d M$ & 86.5 & 88.7 & 95.4 & 88.7 & 95.5 & 89.4 & 96.6 & 88.8 & 96.1 \\ 
  200 & $\d N$ & 93.2 & 92.3 & 95.7 & 92.3 & 95.8 & 92.6 & 96.3 & 92.3 & 96.1 \\ 
      & $\d M$ & 91.7 & 92.2 & 95.7 & 92.3 & 95.7 & 92.4 & 96.4 & 92.2 & 95.9 \\ 
  400 & $\d N$ & 95.8 & 94.5 & 96.2 & 94.6 & 96.1 & 94.7 & 96.3 & 94.5 & 96.2 \\ 
      & $\d M$ & 95.1 & 94.4 & 96.2 & 94.5 & 96.2 & 94.6 & 96.6 & 94.6 & 96.4 \\ 
   \hline
\end{tabular}
\caption{Simulated coverage probabilities (in \%) of various 95\% confidence bands for the baseline cumulative hazard function and sample sizes $n = 100, 200, 400$ with standard normal wild bootstrap multipliers and empirical variance estimators.} 
\label{tab:norm}

\begin{tabular}{|cc|c|c|c|c|c|c|c|c|c|}
  \hline
 &  &  & \multicolumn{4}{|c|}{Hall-Wellner} & \multicolumn{4}{|c|}{equal precision}  \\ 
   &  & timereg & \multicolumn{2}{|c|}{estimating} & \multicolumn{2}{|c|}{direct} & \multicolumn{2}{|c|}{estimating} & \multicolumn{2}{|c|}{direct} \\ 
   & resampling & standard & \multicolumn{2}{|c|}{equation} & \multicolumn{2}{|c|}{resampling} & \multicolumn{2}{|c|}{equation} & \multicolumn{2}{|c|}{resampling} \\ 
  $n$ & approach & band & id & log & id & log & id & log & id & log \\ \hline
  100 & $\d N$ & 89.5 & 90.0 & 96.6 & 90.0 & 96.5 & 94.3 & 99.2 & 93.4 & 98.9 \\ 
      & $\d M$ & 86.5 & 90.2 & 96.7 & 89.8 & 96.4 & 94.3 & 99.2 & 92.3 & 98.2 \\ 
  200 & $\d N$ & 93.2 & 92.9 & 96.3 & 92.9 & 96.2 & 95.1 & 98.4 & 94.6 & 98.0 \\ 
      & $\d M$ & 91.9 & 93.4 & 96.6 & 93.3 & 96.4 & 95.1 & 98.6 & 94.2 & 97.9 \\ 
  400 & $\d N$ & 95.8 & 94.8 & 96.3 & 94.8 & 96.3 & 96.0 & 97.6 & 95.7 & 97.3 \\ 
      & $\d M$ & 95.1 & 94.8 & 96.3 & 94.7 & 96.4 & 96.0 & 97.7 & 95.5 & 97.3 \\ 
   \hline
\end{tabular}
\caption{Simulated coverage probabilities (in \%) of various 95\% confidence bands for the baseline cumulative hazard function and sample sizes $n = 100, 200, 400$ with standard exponential wild bootstrap multipliers and empirical variance estimators.} 
\label{tab:exp}

\begin{tabular}{|cc|c|c|c|c|c|c|c|c|c|}
  \hline
  &  &  & \multicolumn{4}{|c|}{Hall-Wellner} & \multicolumn{4}{|c|}{equal precision}  \\ 
   &  & timereg & \multicolumn{2}{|c|}{estimating} & \multicolumn{2}{|c|}{direct} & \multicolumn{2}{|c|}{estimating} & \multicolumn{2}{|c|}{direct} \\ 
   & resampling & standard  & \multicolumn{2}{|c|}{equation} & \multicolumn{2}{|c|}{resampling} & \multicolumn{2}{|c|}{equation} & \multicolumn{2}{|c|}{resampling} \\ 
  $n$ & approach & band & id & log & id & log & id & log & id & log \\ \hline
  100 & $\d N$ & 89.5 & 88.9 & 95.7 & 88.8 & 95.6 & 91.0 & 97.5 & 90.0 & 97.0 \\ 
      & $\d M$ & 86.5 & 88.8 & 95.6 & 89.1 & 95.6 & 91.0 & 97.6 & 89.6 & 96.9 \\ 
  200 & $\d N$ & 93.2 & 92.3 & 95.8 & 92.5 & 95.7 & 93.4 & 97.0 & 92.8 & 96.6 \\ 
      & $\d M$ & 91.9 & 92.9 & 96.1 & 92.9 & 96.2 & 93.4 & 97.2 & 92.9 & 96.7 \\ 
  400 & $\d N$ & 95.8 & 94.6 & 96.2 & 94.6 & 96.2 & 95.0 & 96.7 & 94.7 & 96.5 \\ 
      & $\d M$ & 95.1 & 94.5 & 96.2 & 94.5 & 96.2 & 95.1 & 96.8 & 94.8 & 96.6 \\ 
   \hline
\end{tabular}
\caption{Simulated coverage probabilities (in \%) of various 95\% confidence bands for the baseline cumulative hazard function and sample sizes $n = 100, 200, 400$ with standard Poisson wild bootstrap multipliers and empirical variance estimators.} 
\label{tab:pois}

\end{table}
The obtained empirical coverage probabilities are given in Tables~\ref{tab:norm}--\ref{tab:pois}.

%
%
%

We note that when the sample size is $400$ all methods gives a reasonable performance. When the sample size is smaller there are notable
differences, and it seems that the $\log$-transform does improve the performance in this case.
Whether the bootstrap is based on $\d N$ or $\d M$ does not seem important, and in terms of computations it is considerably easier and faster
to use the multipliers based on $\d N$. 

Even though there are strong theoretical and practical advantages of the Poisson variables over standard normal multipliers in nonparametric competing risks models \citep{dobler2017non},
the choice of the bootstrap multipliers does not seem highly important here.
Also, the choice of the particular resampling method, be it the direct approach of Section~\ref{sec:dirres} or the estimating equation approach of Section~\ref{sec:esteq}, does not seem to have a clearly positive or negative impact on the outcomes.

Finally, we would like to note that our simulation results are only partially comparable to those of \cite{lin1994confidence}:
here, we construct confidence bands for the baseline cumulative hazard function, i.e. for an individual with covariate $X_i=0$,
whereas they consider bands for survival curves for multiple covariates and their utilized transformations result in different bands.
Overall, however, the empirical reliability of the bands in both simulation studies, i.e. theirs and ours, are approximately the same.

%
\section{Data example}
%


In this section we briefly demonstrate how the confidence bands should
be used in a standard survival setting. The key point is that they
are most often the ones of interest unless focus is on a particular 
survival probability at a specific time such as for example $5$ year survival. 

We consider the TRACE study \citep{trace} 
where interest is on survival after acute myocardial infarction for 1878
consecutive patients included in the study. The data-set is available in 
the \emph{timereg} \texttt{R}-package. Here for sake of illustration we focus interest on
the covariates diabetes (1/0), sex and age.  
Due to the large sample size, we decided to use the \emph{timereg}-bands and show the 
survival predictions with uniform 95\% equal precision bands  based on the identity
transformation (broken lines) and standard normal multipliers. We also computed 95\% point-wise confidence intervals (dotted lines).
We depict the confidence bands for a male with average age
(66.9 years) and with or without diabetes, as well as the standard 95\% point-wise 
confidence intervals.  
\begin{figure}[ht]
  \centering
  \includegraphics[width=0.7\textwidth]{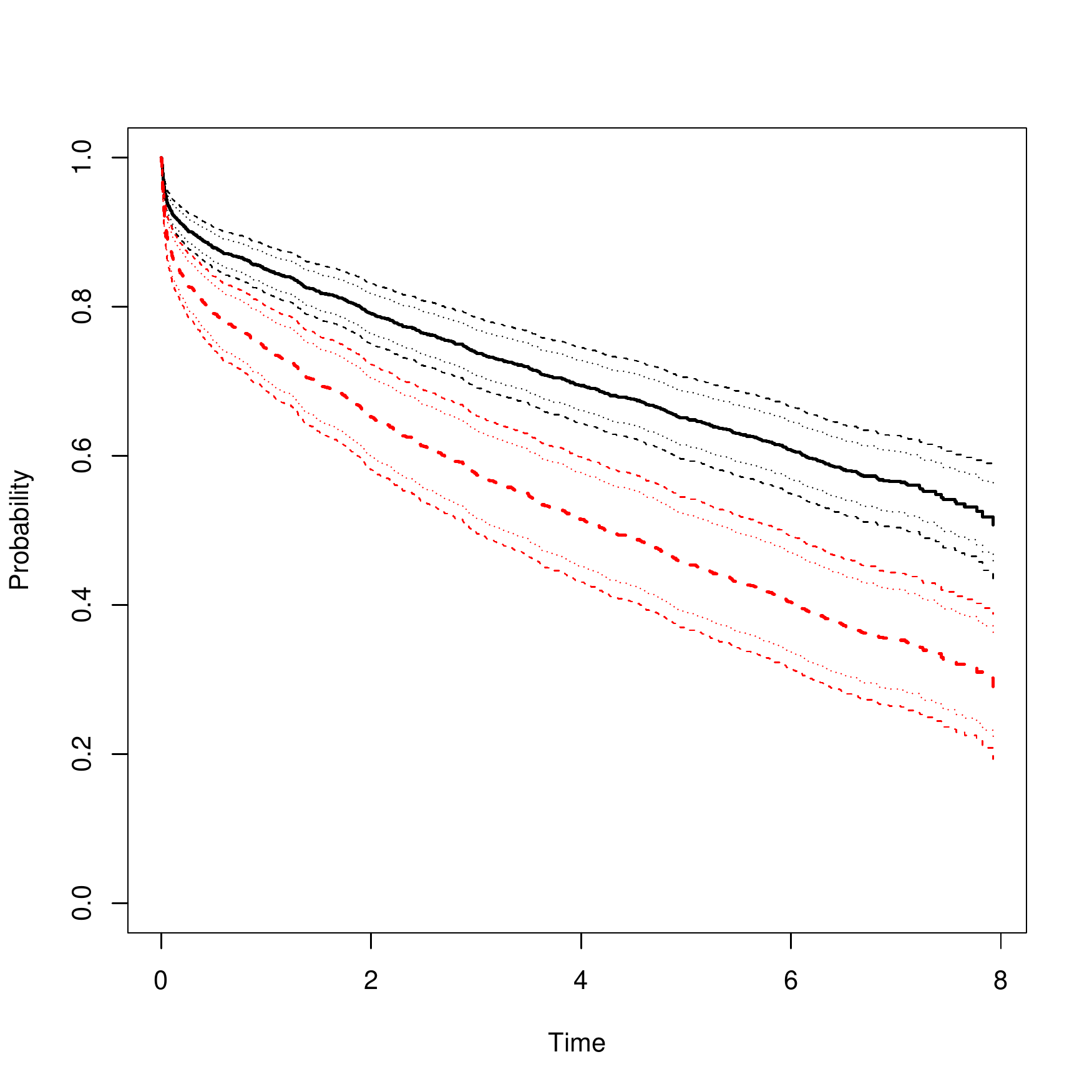}
	\caption{Survival function estimates for males with average age, with (fat broken line) or without diabetes (fat solid line), 
	and 95 \% confidence intervals (dotted lines) as well as
	95 \% bootstrap confidence bands (broken lines).}
  \label{f:trace-survival}
\end{figure}
We note that the hazard ratio related to diabetes is $1.82$ with 95\% confidence
interval $(1.50,2.18)$. Thus reflecting that diabetes is a factor that leads to
increased mortality. More interestingly, seen in connection with absolute level of 
mortality, this is then reflected in our estimated survival curves
for males with average age and with diabetes (lower broken fat curve, with confidence bands and intervals) 
or without diabetes (upper solid fat curve). We note that the bands are a bit wider than the point-wise intervals. As the latter do not provide simultaneous coverage 
the bands should be used to provide uncertainty about the entire survival curve as 
shown in Figure 1.

We finally illustrate how
the joint asymptotic distribution of the baseline and the covariates
can be used with other functionals. To this end, consider the restricted residual mean 
$$
\Psi(\Lambda_0)=\int_0^\tau \exp(-\Lambda_0(s))\d s
$$ 
with estimator $\Psi(\hat \Lambda_0)$. To get a description of its uncertainty based on the wild bootstrap constructions
we can simply apply the functional to the obtained bootstrap samples. 
It follows that 
$\sqrt{n}(\Psi(\hat \Lambda_0) - \Psi(\Lambda_0))$ has the same asymptotic 
distribution as 
$\sqrt{n}(\Psi(\hat \Lambda_0^*) - \Psi(\hat \Lambda_0))$ due to Hadamard differentiability of
the functional. Thus, we can easily construct symmetric 95\% confidence intervals for 
the restricted residual mean and their differences based on the bootstrap. The key point being that these are 
very easy to get at when the bootstrap estimates are at hand. 

For example, using the direct wild bootstrap approach based on $dN$ and standard normal multipliers,
we find that males with diabetes have a restricted residual mean within the first $5$ years at $3.87 (3.74,4.00)$ for males without diabetes and 
$3.15 (2.91,3.41)$ with diabetes. Males with diabetes thus lose $0.71 (0.49,0.93)$ years within the first $5$ years.
In a similar way confidence intervals for other functionals can be obtained by means of the continuous mapping theorem or the functional delta method.


%
\section{Discussion and further research}
%


Despite their importance, confidence bands are not used much in practice even 
though there is considerable interest in making survival predictions based on
semiparametric regression models such as the Cox model. This is probably due 
to the fact that the key software solutions do not have confidence bands 
implemented in this setting
. The aim of this work is to investigate some natural and simple wild 
bootstrap approaches for filling this gap. In particular, we have shown in the Appendix
that the proposed bootstrap solutions do asymptotically have the desired properties. 
A key point in our proofs is the fact that we show the properties of our 
bootstrap procedures relying solely on martingale arguments. 
This enormously facilitates the transfer of the classical proofs for the estimators to their wild bootstrap counterparts.
It became apparent that this approach generalizes to much more complex models as long as they admit a martingale structure for the involved counting processes.
This covers for example Cox models in multi-state models or Fine-Gray regression models for subdistribution functions.
A future work will focus on how the procedure can be adapted to more complex designs.

In addition, we consider the finite sample performance of various confidence bands and we see that, 
when the sample size is too small, one needs to be cautious when constructing such bands. 
When the sample size is reasonable, however, the bands perform well and should be the
preferred way of illustrating the uncertainty of the survival curves. 

Another, nice feature of the bootstrap approach is that it provides a very simple 
tool for constructing confidence intervals for functionals of the parameters of 
interest. We illustrated this by computing the restricted residual mean based 
on estimates from the Cox model.

\section*{Acknowledgements}
Markus Pauly likes to thank for the support from the German Research Foundation (Deutsche Forschungsgemeinschaft).

\appendix

\section*{Appendix}

%
\section{Proofs}
%

\begin{proof}[Proof of Lemma~\ref{lemma:prelim}]
 To a large extent, it is possible to parallel the martingale arguments as used in the proofs of Theorems~VII.2.1 and~VII.2.2 in \cite{abgk93}.
 We show the proof for the resampling scheme~\eqref{eq:score_beta_wbs} only; 
 once it has been understood how martingale methods can be applied here,
 it will be apparent how to conduct the proof for the classical wild bootstrap scheme~\eqref{eq:wbs1-2} which entirely consists of martingales.

 \medskip

\noindent \textbf{Proof of 3.}
 We introduce the process $C_t^*(\vbeta) = \sum_{i=1}^n (G_i + 1) [\int_0^t \vbeta' \vX_i(s) \d N_i(s) - \int_0^t \log {S_0(s, \vbeta)} \d N_i(s)]$ such that $\nabla C_\tau^*(\vbeta) = \vU_\tau^*(\vbeta)$, where $\nabla$ again denotes the gradient with respect to $\vbeta$.
 We wish to analyse the asymptotic behaviour of the process
 \begin{align*}
  X^*(t, \vbeta) = \frac1n (C_t^*(\vbeta) - C_t^*(\hat \vbeta))
   = \frac1n \sum_{i=1}^n (G_i + 1) \Big[\int_0^t (\vbeta - \hat \vbeta)' \vX_i(s) \d N_i(s) - \int_0^t \log \frac{S_0(s, \vbeta)}{S_0(s, \hat \vbeta)} \d N_i(s)\Big]
 \end{align*}
 whose compensator is  $\tilde X^*(t, \vbeta) = \frac1n \sum_{i=1}^n [\int_0^t (\vbeta - \hat \vbeta)' \vX_i(s) \d N_i(s) - \int_0^t \log \frac{S_0(s, \vbeta)}{S_0(s, \hat \vbeta)} \d N_i(s)]$.
 
 Indeed, it turns out that functions of the form
 $$ m: t \mapsto \sum_{i=1}^n G_i \int_0^t k_{n,\vbeta,i}(s) \d N_i(s)  $$
 are martingales with respect to the filtration given by
 $$\mc F_t = \sigma\{N_i(s), Y_i(s), \vX_i(s), G_i \cdot N_i(v): 0 \leq s \leq \tau, \ 0 \leq v \leq t, \ i=1,\dots, n \},$$
 where the function $k_{n,\vbeta,i}$ is measurable with respect to $\mc F_0$.
 To verify this, we consider for $0 \leq r \leq t$
 \begin{align*}
  E[ m(t) \mid \mc F_r ] = \sum_{i=1}^n G_i \int_0^r k_{n,\vbeta,i}(s) \d N_i(s) + \sum_{i=1}^n \int_r^t k_{n,\vbeta,i}(s) \d E(G_i N_i(s) \mid \mc F_r) = m(r)
 \end{align*}
 due to  $E(G_i N_i(s) \mid \mc F_r) =   E(G_i N_i(s) \mid \mc F_0) =  E(G_i \mid \mc F_0) N_i(s) = 0$ for $r \leq s$.
 See \cite{dobler2016nonparametric} or \cite{bluhmki18b} for similar arguments in a nonparametric context.
 Similarly, it can be shown that $m$ has the predictable variation process given by
 $ \langle m \rangle(t) = \sum_{i=1}^n \int_0^t k_{n,\vbeta,i}^2 (s) \d N_i(s), $
 and the optional variation process given by
 $ [ m ](t) = \sum_{i=1}^n G_i^2 \int_0^t k_{n,\vbeta,i}^2 (s) \d N_i(s). $
 
 Thus, the predictable variation process of $X^*(t, \vbeta)$ is given by
 \begin{align}
  \langle X^*( \cdot, \vbeta) - \tilde X^*( \cdot, \vbeta) \rangle(t) 
   & = \frac1{n^2} \sum_{i=1}^n \int_0^t \Big[ (\vbeta - \hat \vbeta)' \vX_i(s) -  \log \frac{S_0(s, \vbeta)}{S_0(s, \hat \vbeta)} \Big]^2 \d N_i(s) \nonumber \\
   \label{eq:cp_VII21}
   & = \frac1{n^2} \sum_{i=1}^n \int_0^t \Big[ (\vbeta - \vbeta_0)' \vX_i(s) -  \log \frac{S_0(s, \vbeta)}{S_0(s, \vbeta_0)} \Big]^2 \d N_i(s)
   + O_p(n^{-2}),
 \end{align}
where the second equality follows from $\hat \vbeta - \vbeta_0 = O_p(n^{-1/2})$ in combination with the mean-value theorem applied to the function $\vbeta \mapsto S_0(s, \vbeta)$.
Its gradient, where the different partial derivatives are evaluated at different intermediate vectors $\tilde \vbeta$ \citep{feng13}, is bounded in probability.
Hence, we use Conditions~\ref{cond:abgk}(a)--(c) in combination with the conditional version of Lenglart's inequality (Section~II.5.2.1 in \citealt{abgk93}) and the fact that $n$ times the compensator of the  counting process integral in~\eqref{eq:cp_VII21} evaluated at $\tau$ converges in (conditional) probability to a finite function of $\vbeta$ to conclude that 
$n \langle X^*( \cdot, \vbeta) - \tilde X^*( \cdot, \vbeta) \rangle(\tau)$ too
converges to a finite function in $\vbeta$ in conditional probability as $n \rightarrow \infty$.
Furthermore, we use Lenglart's inequality again to show that the compensator $\tilde X^*( \cdot, \vbeta) $
converges in unconditional probability to
$$ f(\vbeta) = \int_0^\tau \Big[ (\vbeta - \vbeta_0)' s_1(s,\vbeta_0) - \log \frac{s_0(s,\vbeta)}{s_0(s,\vbeta_0)}  s_0(s, \vbeta_0) \Big] \lambda_0(s) \d s. $$
To see this, we again argue that 
$$\tilde X^*( \cdot, \vbeta) = \frac1n \sum_{i=1}^n \Big[\int_0^t (\vbeta - \vbeta_0)' \vX_i(s) \d N_i(s) - \int_0^t \log \frac{S_0(s, \vbeta)}{S_0(s, \vbeta_0)} \d N_i(s) \Big] + o_p(1) $$
for similar reasons as above.
Now, this counting process integral has an (unconditional) compensator that converges to 
$ f(\vbeta) $ in probability  as $n \rightarrow \infty$.
Another application of Lenglart's inequality can be used to show that, $n \rightarrow \infty$, $\tilde X^*( \cdot, \vbeta) \oPo f(\vbeta) $ in probability as well.
Finally, adding all arguments together, one last application of the conditional version of Lenglart's inequality implies that  $X^*( \cdot, \vbeta) \oPo f(\vbeta) $ in conditional probability  as $n \rightarrow \infty$.

Now, we can argue similarly to the proof of Theorem~VII.5.2.1 in \cite{abgk93}: 
By Condition~\ref{cond:abgk}(b)--(d), we have for any $\vbeta$
$$ \nabla f(\vbeta) = \int_0^\tau (\ve(s, \vbeta_0) -  \ve(s, \vbeta)) s_0(s, \vbeta_0) \lambda_0(s) \d s $$
and $\nabla f(\vbeta_0) = 0$. 
Furthermore, 
$$ - \nabla^2 f(\vbeta) = \int_0^\tau \vv(s, \vbeta_0) s_0(s, \vbeta_0) \lambda_0(s) \d s $$
which is positive semidefinite and positive definite for $\vbeta = \vbeta_0$; cf. Condition~\ref{cond:abgk}(e).

To make the following arguments less ambiguous, we use the subsequence principle for convergence in probability and fix, for any arbitrary subsequence $(n') \subset (n)$ another subsequence $(n'') \subset (n')$ such that the conditional convergence in probability $X^*( \cdot, \vbeta) \oPo f(\vbeta)$ given $\mc F_0$ holds almost surely along this subsequence $(n'')$. 
This convergence is point-wise in $\vbeta$ and the concave function $f(\vbeta)$ has a unique maximum at $\vbeta = \vbeta_0$.
The random function $X^*( \cdot, \vbeta)$ is also concave with a maximum at $\vbeta = \hat \vbeta^*$ if it exists.
We use Theorem~II.1 in Appendix~II of \cite{andersen1982} to conclude the uniformity of the convergence $X^*( \cdot, \vbeta) \oPo f(\vbeta) $.
For this reason, the maximizing value $\hat \vbeta^*$ of $X^*( \cdot, \vbeta)$ converges to the maximizing value $\vbeta_0$ of $f$ in conditional probability given $\mc F_0$ almost surely along the subsequence $(n'')$ chosen above.
We apply the subsequence principle another time to conclude that the conditional convergence in probability $\hat \vbeta^* \oPo \vbeta_0$ given $\mc F_0$ holds in probability as $n \rightarrow \infty$.
But the same convergence holds for $\hat \vbeta$, so the distance $\| \hat \vbeta^* - \hat \vbeta\| $ becomes arbitrarily small in conditional probability.

\medskip

\noindent \textbf{Proof of 2.}
Consider $\tilde D \vU_\tau^*(\tilde \vbeta) = (\nabla (\vU_\tau^{*})^{(1)} (\vbeta_1)', \dots, \nabla (\vU_\tau^{*})^{(p)} (\vbeta_p)')' $
where each $\vbeta_i$ is on the line segment between $\hat\vbeta^*$ and $\hat\vbeta$.
We again use martingale theory to prove the desired conditional convergences.
Without loss of generality, let us consider the complete matrix $\tilde D \vU_t^{*} (\tilde \vbeta)$ at only one intermediate vector $\tilde \vbeta$ on the line segment between $\hat \vbeta^*$ and $\hat \vbeta$
because if this matrix converges, then also each row converges, and hence the collection of several such rows converge as desired.

We make use of the following decomposition:
$$- \frac1n \tilde D \vU_\tau^*(\tilde \vbeta) = \frac1n \sum_{i=1}^n (G_i + 1) \int_0^t \vV(s, \tilde \vbeta) \d N_i(s) = \frac1n \sum_{i=1}^n G_i \int_0^t \vV(s, \tilde \vbeta) \d N_i(s) + \frac1n \int_0^t \vV(s, \tilde \vbeta) \d \sum_{i=1}^n N_i(s). $$
The first term on the right-hand side is asymptotically equivalent to 
$\frac1n \sum_{i=1}^n G_i \int_0^t \vV(s, \hat \vbeta) \d N_i(s) $;
indeed, $\| \tilde \vbeta - \hat \vbeta \| \oPo 0$ given $\mc F_0$ in conditional probability as $n \rightarrow \infty$ and $\vV$ converges uniformly to $\vv$ in both arguments. 
The remaining term $\frac1n \sum_{i=1}^n |G_i| N_i(t) $ is $O_p(1)$ and, therefore, does not matter.

Hence, we may as well focus on the square-integrable martingale
$\frac1n \sum_{i=1}^n G_i \int_0^t \vV(s, \hat \vbeta) \d N_i(s) $.
By using similar martingale arguments as above, i.e.\ Lenglart's inequality, it can again be shown that this term is asymptotically negligible.

It remains to analyse $ \frac1n \int_0^t \vV(s, \tilde \vbeta) \d \sum_{i=1}^n N_i(s). $
But, after having again argued why $\tilde \vbeta$ can be replaced with $\hat \vbeta$ and given $\mc F_0$, this term is deterministic and its unconditional asymptotic bevahiour is known:
it converges in probability to $\int_0^t \vv(s, \beta_0) s_0(s, \beta_0) \lambda_0(s) \d s$; 
cf. the proof of Theorem~VII.2.2 in \cite{abgk93}.
We conclude that $- \frac1n \tilde D \vU_\tau^*(\tilde \vbeta) \oPo \int_0^t \vv(s, \beta_0) s_0(s, \beta_0) \lambda_0(s) \d s$ in conditional probability given $\mc F_0$ as $n \rightarrow \infty$.

\medskip

\noindent \textbf{Proof of 1.}
We make use of the fact that $n^{-1/2} \vU_t^*(\hat \vbeta) = n^{-1/2} \sum_{i=1}^n G_i \int_0^\tau [ \vX_i(s) - \vE(s, \hat \vbeta) ] \d N_i(s)$ defines a square-integrable martingale with respect to the filtration $(\mc F_s)_s$.
This can be shown in the same way as for the other martingales above. 
Its predictable variation process is given by
$$n^{-1} \langle \vU_{(\cdot)}^*(\hat \vbeta) \rangle (t) =  \frac1n \sum_{i=1}^n  \int_0^t [ \vX_i(s) - \vE(s, \hat \vbeta) ]^{\otimes 2} \d N_i(s). $$
Similarly as before, this function is (unconditionally) asymptotically equivalent to
\begin{align*}
 & \frac1n \sum_{i=1}^n  \int_0^t [ \vX_i(s) - \vE(s, \vbeta_0) ]^{\otimes 2} \d N_i(s) \\
 & =  \frac1n \sum_{i=1}^n  \int_0^t [ \vX_i(s) - \vE(s, \vbeta_0) ]^{\otimes 2} \d M_i(s)
 +  \frac1n \sum_{i=1}^n  \int_0^t [ \vX_i(s) - \vE(s, \vbeta_0) ]^{\otimes 2} S_0(s, \vbeta_0) \lambda_0(s) \d s .
\end{align*}
The second term on the right-hand side converges in probability to $\int_0^t \vv(s, \beta_0) s_0(s, \vbeta_0) \lambda_0(s)$ while the remaining martingale term vanishes asymptotically:
its predictable variation process is given by 
$$ \frac1{n^2} \sum_{i=1}^n  \int_0^t [ \vX_i(s) - \vE(s, \vbeta_0) ]^{\otimes 4}  S_0(s, \vbeta_0) \lambda_0(s) \d s , $$
where $ {\bf{b}}^{\otimes 4} $ is basically  the array of all pairs of entries of the matrices $\bf{b}^{\otimes 2}$ and $\bf{b}^{\otimes 2}$.
Clearly, this predictable variation goes to zero in probability since the $\vX_i$ are bounded and the functions $\vE$ and $S_0$ converge uniformly in probability to bounded functions.
It remains to apply Rebolledo's martingale central limit theorem (Theorem~II.5.1 in \citealp{abgk93}) to conclude the asymptotic normality of 
$n^{-1/2} \vU_\tau^*(\hat \vbeta)$.
To this end, we again use the subsequence principle.
We see that, given $\mc F_0$ and along subsequences, the conditions of Rebolledo's theorem are satisfied almost surely; particularly the convergence of the predictable variation process of the square-integrable martingale
$t \mapsto n^{-1/2} \vU_t^*(\hat \vbeta)$.
Hence, almost surely along any subsequence, this process converges in distribution on the Skorokhod space $D[0,\tau]$ to a zero-mean Gaussian martingale whose covariance function is determined by the limit of the predictable variation process.
We have thus shown that $n^{-1/2} \vU_\tau^*(\hat \vbeta)$ converges in distribution to a random vector with a multivariate normal distribution.
Another application of the subsequence principle transfers the result to conditional convergence in probability along the original sequence $(n)$.
\end{proof}

\begin{proof}[Proof of Theorem~\ref{thm:wbs}]
 We only prove the assertion on the ``wild bootstrapping the score equations" approach. The applicability of the \cite{lin93} multiplier scheme follows along the same lines and is in fact more easily to prove because less applications of the mean-value theorem are required.
 All in all, we will make use of similar martingale arguments for the wild bootstrapped estimators as in the proof of Lemma~\ref{lemma:prelim}.
 To increase readability, some repeating arguments are omitted.
 
 %
 %
 As a first step, paralleling the proof of Theorem~VII.2.3 in \cite{abgk93},
 we deduce a useful asymptotic representation of the first part of $\sqrt{n} (\hat \Lambda_0^*(t, \hat \vbeta^*) - \hat \Lambda_0(t, \hat \vbeta))$, i.e., of
 \begin{align*}
  & \sqrt{n} \sum_{i=1}^n (G_i+1) \int_0^t [S_0^{-1}(u, \hat \vbeta^*) - S_0^{-1}(u,\hat \vbeta)] \d N_i(u) \\
  & = - \sqrt{n} (\hat \vbeta^* - \hat \vbeta) \sum_{i=1}^n (G_i+1) \int_0^t   \vE(u,  \tilde \vbeta^*)S_0^{-1}(u, \tilde \vbeta^*) \d N_i(u).
 \end{align*}
 This equality holds due to a Taylor expansion around $\hat \vbeta$.
 Here, $\tilde \vbeta^*$ is on the line segment between $\hat \vbeta^*$ and $\hat \vbeta$.

  Note  that we can replace the intermediate $\tilde \vbeta^*$ vectors by $\hat \vbeta$ because the resulting error 
  \begin{align}
  \label{disp:proof_taylor}
   \sum_{i=1}^n (G_i+1) \int_0^t  [  \vE(u, \tilde \vbeta^*)S_0^{-1}(u, \tilde \vbeta^*)- \vE(u, \hat \vbeta)S_0^{-1}(u, \hat \vbeta) ] \d N_i(u)
  \end{align}
  converges to zero in conditional probability as $n \rightarrow \infty$.
  Indeed, due to Lemma~\ref{lemma:prelim} in combination with Conditions~\ref{cond:abgk}(a)--(c), the above difference is bounded by $ \frac1n \sum_{i=1}^n | G_i + 1| o_p(1) = o_p(1) $. 
  Hence, 
  $$\sqrt{n} (\hat \Lambda_0^*(t, \hat \vbeta^*) - \hat \Lambda_0(t, \hat \vbeta)) =  - \sqrt{n} (\hat \vbeta^* - \hat \vbeta) \Big[ \sum_{i=1}^n (G_i+1) \int_0^t  \vE(u, \hat \vbeta)S_0^{-1}(u, \hat \vbeta) \d N_i(u) + o_p(1) \Big] + W^*(t), $$
  where $W^*(t)= \sqrt{n} \sum_{i=1}^n G_i \int_0^t S_0^{-1}(u, \hat \vbeta) \d N_i(u)$.
  Of the term in square brackets, the following sum vanishes:
  $$ \sum_{i=1}^n G_i \int_0^t  \vE(u, \hat \vbeta)S_0^{-1}(u, \hat \vbeta) \d N_i(u) = \frac{1}{\sqrt{n}} \Big(  \frac{1}{\sqrt{n}} \sum_{i=1}^n G_i \int_0^t  \vE(u, \hat \vbeta)\frac{n}{S_0(u, \hat \vbeta)} \d N_i(u) \Big)$$
  because the term in brackets is a square-integrable martingale with respect to $(\mc F_t)_t$, and thus asymptotically Gaussian.
  We conclude that 
   $$\sqrt{n} (\hat \Lambda_0^*(t, \hat \vbeta^*) - \hat \Lambda_0(t, \hat \vbeta)) =  - \sqrt{n} (\hat \vbeta^* - \hat \vbeta) \Big[ \int_0^t  \ve(u, \vbeta_0) \lambda_0(u) \d u + o_p(1) \Big] + W^*(t). $$
  
It remains to analyze $\sqrt{n} (\hat \vbeta^* - \hat \vbeta)$ and $W^*(t)$ jointly.
Thereof, $\sqrt{n} (\hat \vbeta^* - \hat \vbeta)$
 is essentially a linear transformation of $\vU^*_\tau(\hat \vbeta)$.
 Thus, its asymptotic multivariate normality follows from the first two assertions of Lemma~\ref{lemma:prelim} in combination with Slutzky's lemma.
 
For the joint convergence of $\sqrt{n} (\hat \vbeta^* - \hat \vbeta)$ and $W^*(t)$,
 we consider the process $t \mapsto (n^{-1/2} \vU_t^*(\hat \vbeta), W^*(t))'$ which, for similar reasons as in the proof of Lemma~\ref{lemma:prelim} defines a square-integrable martingale with respect to the filtration $(\mc F_t)_{t}$.
 We wish to apply Rebolledo's martingale central limit theorem (with non-trivial initial sigma field $\mc F_0$)
 in order to obtain the desired joint conditional central limit theorem which holds in probability.
 To this end, we analyze the predictable covariation process:
 \begin{align}
 \label{eq:pred_cov_beta_W}
  \langle n^{-1/2} \vU_{(\cdot)}^*(\hat \vbeta), & W^* \rangle(t) 
   = \sum_{i=1}^n \int_0^t [\vX_i(u) - \vE(u, \hat \vbeta) ] S_0^{-1}(u,\hat \vbeta) \d N_i(u).
 \end{align}
 Approximating $\vE(u, \hat \vbeta)$ on the right-hand side by $\vE(u, \vbeta_0)$ (then using~\eqref{eq:uniform_conv_sk_beta_hat})
 and $S_0^{-1}(u,\hat \vbeta)$ by $S_0^{-1}(u,\vbeta_0)$,
 a Taylor expansion around $\vbeta_0$ and the WLLN show that~\eqref{eq:pred_cov_beta_W}
 is asymptotically equivalent to
 \begin{align}
  & \sum_{i=1}^n \int_0^t [\vX_i(u) - \vE(u, \vbeta_0) ] S_0^{-1}(u,\vbeta_0) \d N_i(u) \nonumber \\
  & = \sum_{i=1}^n \int_0^t [\vX_i(u) - \vE(u, \vbeta_0) ] S_0^{-1}(u, \vbeta_0) \d M_i(u) \label{eq:pred_cov_beta_W-2} \\
  & \quad + \sum_{i=1}^n \int_0^t [\vX_i(u) - \vE(u, \vbeta_0) ] S_0^{-1}(u,\vbeta_0) Y_i(u) \exp(\vX_i'(u) \vbeta_0) \d \Lambda_0(u). \label{eq:pred_cov_beta_W-3}
 \end{align}
 By definition of $S_0$ and $S_1$, the second term~\eqref{eq:pred_cov_beta_W-3} on the right-hand side is zero.
 The remaining term~\eqref{eq:pred_cov_beta_W-2} is a martingale with predictable variation
 \begin{align*}
   \sum_{i=1}^n \int_0^t [\vX_i(u) - \vE(u, \vbeta_0) ]^{\otimes 2} S_0^{-2}(u, \vbeta_0)
  Y_i(u) \exp(\vX_i'(u) \vbeta_0) \d \Lambda_0(u) 
   = \int_0^t \vV(t,\vbeta_0) S_0^{-1}(u, \vbeta_0) \d \Lambda_0(u) \oPo 0.
 \end{align*}
 Thus, Lenglart's inequality implies that the martingale~\eqref{eq:pred_cov_beta_W-2} also goes to zero in probability as $n \rightarrow \infty$.
 Hence, $\sqrt{n} (\hat \vbeta^* - \hat \vbeta)$ and $W^*$ are asymptotically independent.
 
 Likewise, the other predictable variation processes (conditionally) converge as follows:
 \begin{align*}
  \langle n^{-1/2} \vU_{(\cdot)}^*(\hat \vbeta) \rangle(t) & \oPo \vSigma_t =  \int_0^t v(u,\vbeta_0) s_0(u, \vbeta_0) \d \Lambda_0(u), \\
  \langle W^* \rangle(t) & \oPo \omega^2(t) := \int_0^t \frac{\d \Lambda_0(u)}{s_0(u,\vbeta_0)}
 \end{align*}
 given $\mc F_0$ in probability by Conditions~\ref{cond:abgk}(a) and~(b). Finally, applying Rebolledo's Theorem, it follows that the optional covariation process $I_t^*(\hat \vbeta)$ of $\vU_t^*(\hat \vbeta)$ converges in conditional probability towards $\vSigma_t$.
\end{proof}

\bibliographystyle{plainnat}
\bibliography{literatur}

\end{document}